\documentclass[12pt]{article}

\usepackage[utf8]{inputenc}
\usepackage[T1]{fontenc}
\usepackage{lmodern}
\usepackage[english]{babel}
\usepackage{amsmath,mathtools,amsthm,amssymb,amsfonts}
\usepackage{color}
\usepackage{comment}
\usepackage{hyperref}

\usepackage{mathrsfs}
\usepackage{graphicx}
\usepackage{float}
\usepackage{enumitem}
\usepackage{tikz} 
\usepackage{esint}
\usepackage{pgfplots}
\usepackage{esvect}
\usepackage{pdfpages}
\usepackage{nicefrac}
\usepackage{leftidx, tensor}
\usepackage{cleveref}
\usepackage{accents}
\usetikzlibrary{shapes,arrows}
\usetikzlibrary{calc}
\usetikzlibrary{shapes.geometric}
\usetikzlibrary{shapes.arrows}
\usetikzlibrary{arrows.meta}
\usetikzlibrary{decorations.markings}
\usetikzlibrary{fit}
\usetikzlibrary{patterns}
\usetikzlibrary{hobby}
\usepgfplotslibrary{patchplots}
\pgfplotsset{compat=1.11}

\numberwithin{equation}{section}

\renewcommand{\thefootnote}{\fnsymbol{footnote}}
\newcommand{\definedas}{\mathrel{\raise.095ex\hbox{\rm :}\mkern-5.2mu=}}

\newcommand{\R}{\mathbb{R}}

\newcommand{\Sbb}{\mathbb{S}}

\renewcommand{\d}{\,\mathrm{d}}

\newcommand{\ul}[1]{\underline{#1}}

\newcommand{\btr}[1]{\left\vert#1\right\vert}
\newcommand{\newbtr}[1]{\vert#1\vert}

\newcommand{\spann}[1]{\left\langle#1\right\rangle}

\newcommand{\Ric}{\mathrm{Ric}}
\newcommand{\scal}{\mathrm{R}}
\newcommand{\Riem}{\operatorname{Rm}}

\newcommand{\two}{\operatorname{II}}
\newcommand{\tr}{\text{tr}}
\newcommand{\dive}{\operatorname{div}}

\theoremstyle{plain}
\newtheorem{thm}{Theorem}[section]
\newtheorem*{thm*}{Main Theorem}
\newtheorem{prop}[thm]{Proposition}

\theoremstyle{definition}
\newtheorem{defi}[thm]{Definition}

\newtheorem{bem}[thm]{Remark}
\newtheorem{kor}[thm]{Corollary}

\begin{document}
	\begin{center}\LARGE A note on the stability of surfaces along null cones under area-preserving variations \end{center}
	\vspace{0.5cm}
	\begin{center}
		{\large Markus Wolff}
	\end{center}
	\vspace{0.4cm}
	\begin{abstract}
		In this note we investigate a notion of stability for spacelike cross sections of a null cone under area preserving variations that has been introduced in previous work by Kr\"oncke and the author. Here, we consider null cones with spherical cross sections in a $4$-dimensional spacetime and show that the Hawking energy of a stable cross section admits a non-negative lower bound provided the dominant energy condition holds. Similar to a recent work by Pe\~nuela Diaz, we show that under an additional assumption the Hakwing energy is zero if and only if the stable cross section embeds isometrically into the Minkowski lightcone. As a main result, we show that the only stable cross sections of the standard Minkowski lightcone are round spheres.  \\\\
	\end{abstract}
	
	\renewcommand{\thefootnote}{\arabic{footnote}}
	\setcounter{footnote}{0}
	
	\section{Introduction}\label{sec_intro}
	
		In Riemannian geometry, many fundamental notions of stability arise directly from a geometric variational problem. While the perspective from an underlying variational problem often breaks down when considered in a more general setup, analogous notions of stability have been studied with great success in the context of general relativity. In particular, various notions of stability have been considered in close analogy to the stability for minimal and constant mean curvature surfaces in Riemannian geometry. See for example \cite{alaeelesourdyau,anderssonmarssimon, anderssonmetzger, gallowaymendes1, gallowaymendes2, gallowayschoen, hawking, marsMOTS, penueladiaz} for a non-exhaustive list. 
		
		Given a spacelike codimension-$2$ surface $\Sigma$ in an ambient spacetime $(\overline{M},\overline{g})$, posssibly restricted to a given initial data set $(M,g,K)$ or a given null hypersurface $\mathcal{N}$, one can consider the \emph{null expansions} $\ul{\theta}$, $\theta$ with respect to a null frame $\{\ul{L},L\}$ of $\Gamma(T^\perp\Sigma)$ and the \emph{spacetime mean curvature} $\mathcal{H}^2$, given as the Lorentzian length of the codimension-$2$ mean curvature vector, as suitable generalizations to the concept of mean curvature for a hypersurface in a Riemannian manifold. In particular, marginally trapped surfaces, where either $\ul{\theta}$ or $\theta$ vanish identically, present themselves as the natural analogue to minimal surfaces. The stability of marginally outer trapped surfaces (MOTS) and generalizations of MOTS stability have been for example studied in \cite{alaeelesourdyau, anderssonmarssimon,anderssonmetzger, gallowaymendes1, gallowaymendes2,gallowayschoen, hawking, marsMOTS} in analogy to the stability of minimal surfaces. 
		
		Here, we study a notion of stability under area preserving variations for spacelike cross sections $\Sigma$ of a given null hypersurface $\mathcal{N}$. More precisely, we restrict our attention to null cones \cite{roesch}, i.e., the expansion of $\mathcal{N}$ is strictly positive with respect to a choice of null generator $\ul{L}$. We note that this directly implies that $\ul{\theta}>0$ for every spacelike cross section $\Sigma$ with respect to $\ul{L}$. In view of the typical toy models, such as the Minkowski and Schwarzschild lightcone, it is a natural assumption to make when not considering non-expanding horizons \cite{roesch}, i.e., models for black hole and cosmological horizons. We note however, that in general a null hypersurface is neither.
		
		We say a spacelike cross section $\Sigma$ of a null cone $\mathcal{N}$ is stable (under area-preserving) variations if 
		\[
		\int_\Sigma fJ(f)\ge 0
		\] 
		for all $f\in W^{1,2}(\Sigma)$ such that $\int_\Sigma f\d\mu=0$, where the \emph{Jacobi operator} $J$ is derived from the linearization of the spacetime mean curvature $\mathcal{H}^2$. See \cite{kroenckewolff}. We note that $J$ is gauge-invariant and therefore purely determined by the geometry of $\Sigma$. While the condition $\int_\Sigma f\d\mu=0$ is commonly associated to volume-preserving variations in Riemannian geometry, in particular in the context of CMC stability, there is a natural relation to area-preserving variations on null cones. In fact, we note that there is likely no well-defined notion of enclosed volume in the null case. In this sense, although not arising from an underlying variational problem, the above notion of stability presents itself as a natural null analogue to CMC stability in the Riemannian setting. See Section \ref{sec_stability} for details. We note further that the leaves of the asymptotic foliation by surfaces of constant spacetime mean curvature (STCMC) of an asymptotically Schwarzschildean lightcone constructed in \cite{kroenckewolff} are strictly stable in the above sense.
		
		Here, we also mention the work of Alaee, Lesourd and Yau \cite{alaeelesourdyau} and work of Pe\~nuela Diaz \cite{penueladiaz} who consider similar notions of stability for codimension-$2$ surfaces in an ambient spacetime. In fact, the notion of $\textbf{H}$-stability (under a volume-preserving condition) in \cite{alaeelesourdyau} is equivalent to the the notion of stability considered here if taken with respect to the null direction $\ul{\theta}^{-1}\ul{L}$.
		
		As a first result, we show that a stable spacelike cross section $\Sigma$ with $\mathcal{H}^2\ge0$ of a null cone $\mathcal{N}$ satisfies
		\[
			\int_\Sigma\mathcal{H}^2\le 16\pi
		\]
		if the ambient spacetime satisfies the dominant energy condition. In particular,
		\[
			m_H(\Sigma):=\sqrt{\frac{\btr{\Sigma}}{16\pi}}\left(1-\frac{1}{16\pi}\int_\Sigma\mathcal{H}^2\d\mu\right)\ge 0,
		\]
		where $m_H(\Sigma)$ is the \emph{Hawking energy} of $\Sigma$. See Proposition \ref{prop_hawkingenergy}. This can be understood as the null analogue to an estimate 
		by Christodoulou and Yau \cite{christodoulouyau} for stable CMC surfaces. Note that due to the equivalence to $\textbf{H}$-stability in a suitable null direction, this also directly follows from \cite[Theorem 3.3]{alaeelesourdyau}. See also \cite[Theorem 4.1]{penueladiaz} for a related result. We also provide a rigidity statement under an additional assumption in a similar spirit to \cite[Theorem 4.1]{penueladiaz}, see Proposition \ref{prop_rigidity}.
		
		As a main result (Corollary \ref{kor_mainresult}), we show that any stable spacelike cross section of the standard Minkowski lightcone is an STCMC surface, i.e., a round sphere with constant curvature. This can be understood as the analogue to the well-known fact that any stable topological $2$-sphere in $\R^3$ is a round sphere. The proof relies on a new set of test functions that are derived from a Minkowskian $4$-vector for spacelike cross sections introduced by the author in \cite{wolff4}.\\\\
		
		This paper is structured as follows: In Section \ref{sec_prelim} we briefly summarize some necessary prerequisites. In Section \ref{sec_stability} we introduce the Jacobi operator $J$ and define stability on a null cone. We further show that the Hawking mass is non-negative for stable spacelike cross sections provided the dominant energy condition holds and provide a rigidity statement under an additional assumption. In Section \ref{sec_classS} we look at the stability operator in a class of highly symmetric spacetimes and prove our main result. Finally, we derive another set of test functions motivated by Lorentz transformations in Section \ref{sec_Lorentztrafo}. We make some geometric observations and collect all the necessary identities for applications in the future.\\
		
		\begin{center}
			\textbf{\large Acknowledgments}\\
		\end{center}
		The author thanks Gregory J. Galloway for his interest and inspiring discussions. \newline This research 
		was funded in whole or in part by the Austrian Science Fund (FWF)\newline 10.55776/ESP1094725.

	\section{Preliminaries}\label{sec_prelim}
	
		In the following, $(\overline{M},\overline{g})$ will always denote a $4$-dimensional spacetime, i.e., a time-oriented Lorentzian manifold with signature $(-,+,+,+)$. In particular, we always assume that $(\overline{M},\overline{g})$ admits a globally defined, timelike vector field $X\in\Gamma(TM)$ which induces the well-known notions of future-directed respectively past- directed vectors, vector fields, and curves. We recall that $(\overline{M},\overline{g})$ is said to satisfy the \emph{dominant energy condition} (DEC) if
		\[
			G(Y,Z)\ge 0
		\]
		for all future-directed causal (timelike or null) vector fields $Y,Z\in\Gamma(T\overline{M})$, where the Einstein tensor $G$ is given by 
		\[
			{G}(Y,Z):=\overline{\Ric}(Y,Z)-\frac{1}{2}\overline{\scal}\,\overline{g}(Y,Z).
		\]
		Note that the dominant energy condition directly implies the null energy condition (NEC), i.e.,
		\[
			\overline{\Ric}(L,L)\ge 0
		\]
		for all null vector fields $L\in\Gamma(T\overline{M})$. Here, we use the following conventions for the Riemann curvature tensor $\Riem$, Ricci curvature $\Ric$, and scalar curvature $\scal$ of a semi-Riemannian manifold $(M,g)$:
		\begin{align*}
			\Riem(X,Y,W,Z)&=g(\nabla_X\nabla_YZ-\nabla_Y\nabla_XZ-\nabla_{[X,Y]}Z,W),\\
			\Ric(X,Y)&=\tr_g\Riem(X,\cdot,Y,\cdot),\\
			R&=\tr_g \Ric.
		\end{align*}
		
	We say an oriented hypersurface $\mathcal{N}$ in $(\overline{M},\overline{g})$ is a null hypersurface if its induced metric is degenerate. Equivalently, there exists a vector field $\ul{L}\in \Gamma(T\mathcal{N})$ such that $\ul{L}$ is orthogonal to all tangent directions, i.e.,
	\[
		T_p\mathcal{N}=(\ul{L}_p)^\perp\subseteq T_p\overline{M}\text{ for all }p\in\mathcal{N}.
	\]
	In particular, $\ul{L}$ is orthogonal to itself and therefore null. In the following, we briefly state the properties of null hypersurfaces relevant to our discussion without proof. For a more thorough introduction into null hypersurfaces we refer the interested reader to \cite[Section 4.7]{wolff_thesis} and the references given therein. First, we note that $\mathcal{N}$ is ruled by the integral curves of $\ul{L}$ and call $\ul{L}$ a null generator. The choice of null generator is not unique\footnote{Note that any two choices of null generators are related by a reparametrization of the integral curves, i.e., by scaling of a nowhere vanishing function.} and $\ul{L}$ can always be chosen such that its integral curves are null geodesics. Hence, a null hypersurface $\mathcal{N}$ is ruled by null geodesics. The null second fundamental form of $\mathcal{N}$ (with respect to $\ul{L})$ is defined as
	\[
		\ul{\chi}(X,Y):=-\overline{g}(\overline{\nabla}_XY,\ul{L})
	\]
	for $X,Y\in\Gamma(T\mathcal{N})$, where $\overline{\nabla}$ denotes the Levi-Civita connection of $(\overline{M},\overline{g})$. 
	
	Let $(\Sigma, \gamma)$ be an orientable spacelike codimension-$2$ surface in $(\overline{M},\overline{g})$. Recall that the \emph{vector-valued second fundamental form} of $\Sigma$ in $(M,g)$ is defined as
	\[
	\vec{\two}(V,W)=\left(\overline{\nabla}_VW\right)^\perp
	\]
	for all tangent vector fields $V, W\in\Gamma(T\Sigma)$. Further, the \emph{codimension-$2$ mean curvature vector} $\vec{\mathcal{H}}$ of $\Sigma$ is given by the trace of $\vec{\two}$ with respect to $\gamma$, i.e., $\vec{\mathcal{H}}=\tr_\gamma\vec{\two}$. Additionally, we define the \emph{spacetime mean curvature} $\mathcal{H}^2$ as the Lorentzian length of $\vec{\mathcal{H}}$, i.e., 
	\[
	\mathcal{H}^2=\overline{g}(\vec{\mathcal{H}},\vec{\mathcal{H}}).
	\]
	If $\vec{\mathcal{H}}$ is spacelike, this agrees with the notion of spacetime mean curvature by Ceder\-baum-- Sakovich \cite{cederbaumsakovich} upon taking a square root. However, as $\vec{\mathcal{H}}$ will in general have no  fixed causal character, $\mathcal{H}^2$ can be at least locally negative. Indeed, \emph{trapped surfaces}, where $\vec{\mathcal{H}}$ is timelike everywhere along $\Sigma$ arise naturally in the context of general relativity. Additionally, we call $\Sigma$ a surface of constant spacetime mean curvature (STCMC surface) if $\mathcal{H}^2$ is constant along $\Sigma$; see \cite{cederbaumsakovich}. In terms of the spacetime mean curvature, the Hawking energy $m_H(\Sigma)$ of $\Sigma$ is given by
	\[
		m_H(\Sigma)=\sqrt{\frac{\btr{\Sigma}}{16\pi}}\left(1-\frac{1}{16\pi}\int_\Sigma\mathcal{H}^2\d\mu\right).
	\]
	
	Let $\{\ul{L},L\}$ be a null frame of $\Gamma(T^\perp\Sigma)$ with $g(\ul{L},L)=2$.\footnote{Note that according to this convention, $\ul{L}$ and $L$ have a different causal character; if one is future-directed, the other is past-directed.} Then, $\vec{\operatorname{II}}$ and $\vec{\mathcal{H}}$ admit the decomposition 
	\begin{align}
		\begin{split}\label{eq_decomp1}
			\vec{\operatorname{II}}&=-\frac{1}{2}\chi\underline{L}-\frac{1}{2}\underline{\chi}L,\\
			\vec{\mathcal{H}}&=-\frac{1}{2}\theta\underline{L}-\frac{1}{2}\underline{\theta}L,
		\end{split}
	\end{align}
	where the null second fundamental forms $\underline{\chi}$ and $\chi$ with respect to $\underline{L}$ and $L$, respectively, are defined as
	\begin{align*}
		\underline{\chi}(V,W):=-\overline{g}\left( \overline{\nabla}_VW,\underline{L} \right)&\,&
		{\chi}(V,W)&:=-\overline{g}\left( \overline{\nabla}_VW,L \right),
	\end{align*}
	for tangent vector fields $V,W\in \Gamma(T\Sigma)$, and the null expansions $\underline{\theta}$ and $\theta$ with respect to $\underline{L}$ and $L$, respectively, as
	\begin{align*}
		\underline{\theta}:= \operatorname{tr}_\gamma\underline{\chi},&\,&
		{\theta}:= \operatorname{tr}_\gamma{\chi}.
	\end{align*}
	Under the conventions taken here, the (twice contracted) Gauss Equation becomes
	\begin{align}\label{eq_Gauss}
		\overline{\scal}-2\overline{\Ric}(\ul{L},L)+\frac{1}{2}\overline{\Riem}(\ul{L},L,L,\ul{L})=\scal-\mathcal{H}^2+\newbtr{\vec{\two}}^2,
	\end{align}
	where $\scal$ denotes the scalar curvature of $\Sigma$ and \eqref{eq_decomp1} implies that
	\begin{align}\label{eq_secondffnulldecomp}
		\newbtr{\vec{\two}}^2=\spann{\ul{\chi},\chi},\text{ and }\mathcal{H}^2=\underline{\theta}\theta.
	\end{align}
	See for example \cite[Equation (9)]{anderssonmetzger}, and see \cite[Proposition 4.24]{wolff_thesis} for a proof using the same conventions. We further recall that the connection-$1$ form $\zeta$ of $\Sigma$ (with respect to $\{\ul{L},L\}$) is defined as 
	\[
	\zeta(V):=\frac{1}{2}g\left(\overline{\nabla}_V\underline{L},L \right).
	\]
	In the following, we will always assume that $\Sigma$ is a spacelike cross section of a given null hypersurface $\mathcal{N}$, i.e., $\Sigma$ is a spacelike codimension-$2$ surface of $(\overline{M},\overline{g})$ such that $\Sigma\subseteq\mathcal{N}$ and all integral curves of the null generator $\ul{L}$ intersect $\Sigma$ exactly once. For a spacelike cross section $\Sigma$ of $\mathcal{N}$, we shall always take the normal null vector field $\ul{L}\in \Gamma(T^\perp\Sigma)$ to be the given choice of null generator $\ul{L}$. We note that a null generator $\ul{L}$ of $\mathcal{N}$ is orthogonal to any spacelike cross sections by definition. Additionally, we note that we may use $\ul{\chi}$ in this case both for the spacelike cross section $\Sigma$ and the null hypersurface $\mathcal{N}$ without ambiguity as 
	\[
		\ul{\chi}_\Sigma=\ul{\chi}_{\mathcal{N}}(\,\cdot\,\vert _{T\Sigma},\,\cdot\,\vert_{T\Sigma}).
	\] 
	In particular, we may extend $\ul{\theta}$ to a smooth function on $\mathcal{N}$ via a foliation of spacelike cross sections. In fact, $\ul{\theta}$ extends to a well-defined function on $\mathcal{N}$ that is independent of the choice of foliation; see \cite[Section 4.7 and 4.8]{wolff_thesis} for details. That is to say, the expansion $\ul{\theta}_\Sigma$ for any spacelike cross section of $\mathcal{N}$ is the restriction of $\ul{\theta}_\mathcal{N}$ to $\Sigma$, and again we simply use $\ul{\theta}$ by slight abuse of notation without ambiguity. In particular, $\ul{\theta}$ is only pointwise dependent on the spacelike cross section. We adopt the following definition by Roesch \cite[Definition 2.1]{roesch}:
	\begin{defi}
		We say $\mathcal{N}$ is a \emph{null cone} if there exists a null generator $\ul{L}$ such that $\ul{\theta}>0$ on $\mathcal{N}$.
	\end{defi}
	
	Examples of null cones are the (standard) lightcones in the Minkowski and Schwarzschild spacetime; see Section \ref{sec_classS} below. Note that for null hypersurfaces modeling black hole and cosmological horizons one has $\ul{\theta}=0$ throughout $\mathcal{N}$. In this sense, it is a natural restriction to assume that $\mathcal{N}$ is a null cone when one does not aim to model horizons. However, in general a null hypersurface is neither a null cone nor a horizon.
	
	Let $\Sigma$ be a spacelike cross section with $\ul{\theta}>0$. Then, the \emph{torsion} $\tau$ and \emph{scalar second fundamental form} $A$ are defined as
	\begin{align*}
		\tau&:=\zeta-\d\ln\ul{\theta},\\
		A&:=\ul{\theta}\chi,
	\end{align*}
	see \cite[Section 1]{brayroesch} and \cite[Section 3]{wolff1}. In particular, these objects are always well-defined for a spacelike cross section of a null cone. Moreover, it is easy to check that $\tau$, $A$, $\mathcal{H}^2=\tr A$, $\ul{\theta}^{-1}\ul{L}$, $\ul{\theta}^{-1}\ul{\chi}$ are well-defined and independent of the choice of null generator $\ul{L}$. That is, these well-defined objects on a null cone encode the geometry of $\Sigma$  and $\mathcal{N}$ independent of the gauge choice.
	
	\section{The stability operator}\label{sec_stability}
	
		Let $\mathcal{N}$ be a null cone, $\Sigma$ a spacelike cross section of $\mathcal{N}$, and let $x\colon (-\varepsilon,\varepsilon)\times\Sigma\to\mathcal{N}$ be a (local) smooth variation of $\Sigma$ along $\mathcal{N}$ with
		\[
			\frac{\d}{\d t}x=\varphi\ul{L}.
		\]
		By the well-known Raychaudhuri optical equations, see e.g. \cite[Proposition 4.27]{wolff_thesis}, we note that
		\begin{align*}
			\frac{\d}{\d t}\btr{\Sigma}=\int_{\Sigma}\ul{\theta}\varphi\d\mu,\\
			\frac{\d}{\d t}\mathcal{H}^2=Q(\varphi),
		\end{align*}
		where $Q$ is a second order linear operator which depends on the choice of null generator $\ul{L}$. In \cite[Section 3.2]{kroenckewolff} Kröncke and the author observed that 
		\[
			L(\varphi)=J(\ul{\theta}\varphi),
		\]
		where the second order elliptic operator $J$ is of divergence type and given by 
		\begin{align}
			\begin{split}\label{eq_Jacobi1}
			J(f)=&\,-2\Delta f-4\tau(\nabla f)-f\left[\mathcal{H}^2\left(1+\ul{\theta}^{-2}\left(\newbtr{\accentset{\circ}{\ul{\chi}}}^2+\overline{\Ric}(\ul{L},\ul{L})\right)\right)\right]\\
			&\,\,-f\left[\overline{\Ric}(\ul{L},L)-\frac{1}{2}\overline{\Riem}(\ul{L},L,L,\ul{L})+\spann{\ul{\theta}^{-1}\accentset{\circ}{\ul{\chi}},\accentset{\circ}{A}}+2\dive\tau+2\btr{\tau}^2\right].
			\end{split}
		\end{align}
		We call $J$ the Jacobi operator of $\Sigma$ and note that it is independent of the choice of null generator $\ul{L}$. From this perspective, it does seem convenient to rephrase the variation of a smooth family of spacelike cross sections $x\colon (-\varepsilon,\varepsilon)\times\Sigma\to\mathcal{N}$ along null cones as
		\[
		\frac{\d}{\d t}x=f\ul{\theta}^{-1}\ul{L}, 
		\]
		such that
		\begin{align*}
			\frac{\d}{\d t}\btr{\Sigma}=\int_{\Sigma}f\d\mu,\\
			\frac{\d}{\d t}\mathcal{H}^2=J(f).
		\end{align*}
		Exactly as in \cite[Definition 3.8]{kroenckewolff}, we now consider a notion of stability (under area-preserving variations) along null cones that can be understood as an analogue to the stability of CMC surfaces in the Riemannian setting (although we do not assume the STCMC condition here).
		\begin{defi}\label{defi_stability}
			Let $\mathcal{N}$ be a null cone, $\Sigma$ a spacelike cross section of $\mathcal{N}$. We say $\Sigma$ is stable (under area-preserving variations) if there exists a non-negative constant $c\ge 0$ such that
			\[
				\int_\Sigma fJ(f)\d\mu\ge c\int_\Sigma f^2\d\mu 
			\]
			for all $f\in W^{1,2}(\Sigma)$ such that $\int_\Sigma f\d\mu =0$. We say $\Sigma$ is strictly stable (under area-preserving variations), if $\Sigma$ is stable with $c>0$.
		\end{defi}
		Here, $ W^{1,2}(\Sigma)$ denotes a weighted Sobolev norm as defined in \cite[Section 2.2]{kroenckewolff}, but one may also consider the definition with unweighted norms.
		\begin{bem}\label{bem_stability}
			Observe that Equation \eqref{eq_secondffnulldecomp} and the Gauss Equation \eqref{eq_Gauss} yield that
			\[
				\overline{\Ric}(\ul{L},L)-\frac{1}{2}\overline{\Riem}(\ul{L},L,L,\ul{L})+\spann{\ul{\theta}^{-1}\accentset{\circ}{\ul{\chi}},\accentset{\circ}{A}}=\frac{1}{2}\mathcal{H}^2-\scal-{G}(\ul{L},L),
			\]
			where we recall  that $\scal$ denotes the scalar curvature of $\Sigma$ and ${G}=\overline{\Ric}-\frac{1}{2}\overline{R}\overline{g}$ denotes the Einstein tensor of $(\overline{M},\overline{g})$. As 
			\[
				\newbtr{\ul{\chi}}^2=\frac{1}{2}\ul{\theta}^2+\newbtr{\accentset{\circ}{\ul{\chi}}}^2,
			\]
			integration by parts yields that
			\begin{align*}
				\int_\Sigma fJ(f)\d\mu=\int_\Sigma 2\btr{\nabla f}^2-f^2\left[\mathcal{H}^2\left(1+\ul{\theta}^{-2}\left(\newbtr{\ul{\chi}}^2+\overline{\Ric}(\ul{L},\ul{L})\right)\right)-\scal-{G}(\ul{L},L)+2\btr{\tau}^2\right].
			\end{align*}
		\end{bem}
	\begin{bem}\label{bem_eigenvalues}
		Here, the author wants to comment on \cite[Remark 3.9]{kroenckewolff} where the (strict) stability as in Definition \ref{defi_stability} is stated to be connected to a lower bound on the first eigenvalue of $J$ via a Rayleigh quotient. Let us first comment that in general $J$ is not self-adjoint, so eigenvalues of $J$ have to be defined similar as in \cite{anderssonmarssimon} in analogy to the notion of MOTS stability. To circumvent this subtlety, one may consider the self-adjoint operator ${\widetilde{J}(f):=J(f)+4\tau(\nabla f)+2\dive\tau f}$ as
		\[
			\int_{\Sigma} fJ(f)\d\mu=\int_{\Sigma}f\widetilde{J}(f)\d\mu,
		\] 
		and where $\widetilde{J}=J$ if $\tau\equiv 0$, which is for example always satisfied in Class $\mathcal{S}$ (see Section \ref{sec_classS} below). Even though $\widetilde{J}$ now has a discrete, real spectrum
		\[
			\lambda_0\le \lambda_1\le \dotsc
		\]
		it is in general not clear whether $\lambda_0<\lambda_1$ or if constant functions are eigenfunctions with respect to $\lambda_0$. Hence, stability in the sense of Definition \ref{defi_stability} does in general not infer any information on $\lambda_1$. However, all of the above is true in the case of coordinate spheres $\Sbb^2_r$ in the Schwarzschild lightcone (of positive mass $m>0$) where constant functions are eigenfunctions with respect to the eigenvalue $\lambda_0$ and
		\[
			\lambda_0=-\frac{4}{r^2}+\frac{12m}{r^3}<\frac{12m}{r^3}=\lambda_1.
		\]
		Here we recall that in \cite{kroenckewolff}, the null hypersurface under consideration is assumed to be close to the Schwarzschild lightcone in a strong sense, see \cite[Definition 2.10]{kroenckewolff}, and spacelike cross sections are always assumed to be contained in a very restrictive a-priori class of surface $B_\sigma(B_1,B_2,B_3)$, see \cite[Definition 3.10]{kroenckewolff}, both of which enforce closeness to coordinate spheres in the Schwarzschild lightcone. Hence, while not immediately implied by the strict stability, it is still reasonable to expect that in this case the eigenfunctions with respect to $\lambda_0$ are close to constants and that
		\[
			\lambda_0\le -4+\frac{Cm}{\sigma^3}<0<\frac{\varepsilon m}{\sigma^3}\le \lambda_1
		\] 
		for suitable uniform constants $\varepsilon<12<C$ provided $\sigma$ is sufficiently large.
	\end{bem}
		
	In analogy to the work of Christodoulou--Yau \cite{christodoulouyau} and the recent work of Pe\~nuela Diaz \cite{penueladiaz}, we establish a lower bound on the Hawking energy of stable spacelike cross sections. We note that the result also directly follows from \cite[Theorem 3.3]{alaeelesourdyau} since $\textbf{H}$-stability (under a volume-preserving condition) in the sense of \cite{alaeelesourdyau} in direction $\ul{\theta}^{-1}\ul{L}$ is equivalent to stability as in Definition \ref{defi_stability}. In fact, we use the same set of test functions as in \cite{alaeelesourdyau, penueladiaz} but present a proof for the convenience of the reader.
	\begin{prop}\label{prop_hawkingenergy}
		Let $\mathcal{N}$ be a null cone with spherical cross sections in an ambient $4$-dimensional spacetime $(\overline{M},\overline{g})$ that satisfies the DEC, and let $\Sigma$ be a spacelike cross section with $\mathcal{H}^2\ge 0$. If $\Sigma$ is stable (with respect to area preserving variations) with constant $c\ge 0$, then
		\[
			m_{H}(\Sigma)\ge \frac{2}{3}c\left(\frac{\btr{\Sigma}}{16\pi}\right)^{\frac{3}{2}}.
		\]
	\end{prop}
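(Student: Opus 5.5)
We follow Christodoulou--Yau and use the components of a conformal map as test functions. Since $\Sigma$ is a topological sphere, the uniformization theorem together with the Hersch balancing argument gives a conformal map $\phi=(\phi_1,\phi_2,\phi_3)\colon(\Sigma,\gamma)\to\Sbb^2\subset\R^3$ with $\int_\Sigma\phi_i\d\mu=0$ for $i=1,2,3$. These $\phi_i$ are admissible test functions in Definition \ref{defi_stability}. Two standard facts will be used: $\sum_i\phi_i^2=1$, and, by conformal invariance of the Dirichlet energy in dimension two, $\sum_i\int_\Sigma\btr{\nabla\phi_i}^2\d\mu=2\,\mathrm{area}(\Sbb^2)\deg\phi=8\pi$ for a degree one conformal diffeomorphism.

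Next I insert each $\phi_i$ into the integrated form of the stability inequality from Remark \ref{bem_stability} and sum over $i$. Using $\sum_i\phi_i^2=1$, this gives
\[
16\pi-\int_\Sigma\left[\mathcal{H}^2\left(1+\ul{\theta}^{-2}\left(\newbtr{\ul{\chi}}^2+\overline{\Ric}(\ul{L},\ul{L})\right)\right)-\scal-G(\ul{L},L)+2\btr{\tau}^2\right]\d\mu\ge c\btr{\Sigma}.
\]
The sign of each term is then handled as follows:
\begin{itemize}[label={}]
\item Gauss--Bonnet gives $\int_\Sigma\scal\d\mu=8\pi$.
\item $\btr{\tau}^2\ge0$ can be dropped.
\item Since $\mathcal{H}^2\ge0$ and the NEC (implied by the DEC) holds, $\mathcal{H}^2\ul{\theta}^{-2}(\newbtr{\ul{\chi}}^2+\overline{\Ric}(\ul{L},\ul{L}))\ge\mathcal{H}^2\ul{\theta}^{-2}\cdot\frac12\ul{\theta}^2=\frac12\mathcal{H}^2$.
\item The term $-G(\ul{L},L)$ needs a sign. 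Exactly one of $\ul{L}$, $L$ is future-directed. Hence $-G(\ul{L},L)=G(-\ul{L},L)$ or $G(\ul{L},-L)$, which in either case is $G$ evaluated on two future-directed null vectors, so it is $\ge0$ by the DEC.
\end{itemize}
I expect this sign bookkeeping with the time-orientation convention $\overline{g}(\ul{L},L)=2$ to be the main point requiring care. Note that the inequality above contains $+G(\ul{L},L)$ inside the subtracted bracket. Rewriting $-\int(\dots-G(\ul{L},L))$ as $-\int(\dots)+\int G(\ul{L},L)$, the correct sign therefore makes $\int G(\ul{L},L)\le0$, and this term may be dropped.

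Collecting these estimates,
\[
16\pi-\frac32\int_\Sigma\mathcal{H}^2\d\mu+8\pi\ge c\btr{\Sigma},\qquad\text{i.e.}\qquad \int_\Sigma\mathcal{H}^2\d\mu\le16\pi-\frac23c\btr{\Sigma}.
\]
Substituting into the definition of the Hawking energy,
\[
m_H(\Sigma)=\sqrt{\frac{\btr{\Sigma}}{16\pi}}\left(1-\frac{1}{16\pi}\int_\Sigma\mathcal{H}^2\d\mu\right)\ge\sqrt{\frac{\btr{\Sigma}}{16\pi}}\cdot\frac{2c\btr{\Sigma}}{3\cdot16\pi}=\frac23c\left(\frac{\btr{\Sigma}}{16\pi}\right)^{\frac32},
\]
which is the claimed bound.
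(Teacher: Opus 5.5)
Your proof is correct and follows the paper's argument essentially step for step. You plug Hersch's test functions into the integrated form of Remark \ref{bem_stability}, use $\newbtr{\ul{\chi}}^2\ge\frac12\ul{\theta}^2$ with the NEC and $\mathcal{H}^2\ge0$ to extract $\frac32\mathcal{H}^2$, use the DEC to discard $-G(\ul{L},L)\ge0$, and apply Gauss--Bonnet to reach $\int_\Sigma\mathcal{H}^2\d\mu\le16\pi-\frac23c\btr{\Sigma}$. One small wording slip: the bracket contains $-G(\ul{L},L)$, not $+G(\ul{L},L)$, although your subsequent rewriting and sign conclusion are right.
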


	\begin{bem}\label{bem_hawkingenergy}
		If $c>0$, we note that the DEC may be replaced by a suitable smallness assumption on $\overline{\Ric}$. Indeed, for the asymptotic foliation by stable STCMC surfaces constructed by Kr\"oncke and the author in \cite{kroenckewolff}, the constant $c$ as established in \cite[Proposition 3.10]{kroenckewolff} and the decay assumptions on the curvature in \cite[Definition 2.10]{kroenckewolff} are such that one can still infer the positivity of the Hawking energy along the leaves of the foliation. Note that the positivity of the Hawking mass along the leaves of the foliation has independently been established by Penuela Diaz in \cite{penueladiaz} where he shows that the leaves of the foliation are also variationally stable, see \cite[Theorem 5.4]{penueladiaz}, in the sense of \cite[Definition 3.2]{penueladiaz}.
	\end{bem}
	\begin{proof}
		Note that as $-\ul{L}$ and $L$ are either both past- or future pointing, we have
		\[
			\overline{\Ric}(\ul{L},\ul{L})\ge 0,\text{ and }-{G}(\ul{L},L)={G}(-\ul{L},L)\ge 0
		\]
		by the dominant energy condition. Hence, by Remark \ref{bem_stability} , we find that
		\[
			\int_\Sigma \left[c+\frac{3}{2}\mathcal{H}^2\right]f^2\d\mu\le \int_\Sigma 2\btr{\nabla f}^2+\scal f^2\d\mu.
		\]
		for any $f\in W^{1,2}(\Sigma)$ with $\int_\Sigma f\d\mu=0$. Since $\Sigma$ is a topological $2$-sphere, there exists functions $\widetilde{f}_i$, $i=1,2,3$, such that
		\[
		\int_\Sigma \widetilde{f}_i\d\mu=0,\text{ }\sum\limits_{i=1}^2\widetilde{f}_i^2=1,\text{ }\sum\limits_{i=1}^3\int_\Sigma\btr{\nabla \widetilde{f}_i}^2\d\mu=8\pi,
		\]
		arising from a conformal mapping by Hersch's lemma \cite{hersch}. Hence, using these as test functions and summing over $i$, the above integral estimate and the Gauss--Bonnet theorem imply that
		\[
			\int_\Sigma\mathcal{H}^2\d\mu\le 16\pi-\frac{2}{3}c\btr{\Sigma}.
		\]
		We conclude that
		\[
			m_H(\Sigma)=\sqrt{\frac{\btr{\Sigma}}{16\pi}}\left(1-\frac{1}{16\pi}\int_\Sigma\mathcal{H}^2\d\mu\right)\ge \sqrt{\frac{\btr{\Sigma}}{16\pi}}\left(1-1+\frac{c}{24\pi}\btr{\Sigma}\right)=\frac{2}{3}c\left(\frac{\btr{\Sigma}}{16\pi}\right)^{\frac{3}{2}}.
		\]
	\end{proof}
	
	We can derive the following rigidity statement:
	
	\begin{prop}\label{prop_rigidity}
		Let $\mathcal{N}$ be a null cone with spherical cross sections in an ambient $4$-dimensional spacetime $(\overline{M},\overline{g})$ that satisfies the DEC, and let $\Sigma$ be a stable spacelike cross section with $\mathcal{H}^2\ge 0$. If 
		\[
		m_{H}(\Sigma)=0
		\]
		and $\overline{\Ric}(\ul{L},L)-\frac{1}{2}\Riem(\ul{L},L,L,\ul{L})$ does not change sign along $\Sigma$, then $\Sigma$ embeds isometrically into the standard lightcone of the Minkowski spacetime $(\R^{1,3},\eta)$ such that
		\[
			\eta(\vec{\mathcal{H}}_{Mink},\vec{\mathcal{H}}_{Mink})=\mathcal{H}^2=g(\vec{\mathcal{H}},\vec{\mathcal{H}}),
		\]
		where $\vec{\mathcal{H}}_{Mink}$ denotes the mean curvature vector of $\Sigma$ in $(\R^{1,3},\eta)$.
	\end{prop}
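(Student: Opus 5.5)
The plan is to trace the equality cases in the proof of Proposition \ref{prop_hawkingenergy}, obtain the pointwise identity $\mathcal{H}^2=2\scal$ on $\Sigma$, and then show that every metric on $\mathbb{S}^2$ embeds into the Minkowski lightcone with exactly this spacetime mean curvature.

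\emph{Step 1: equality in the Hawking energy estimate.} Since $m_H(\Sigma)=0$, the estimate of Proposition \ref{prop_hawkingenergy} forces $c=0$ and $\int_\Sigma\mathcal{H}^2\d\mu=16\pi$. Hence every inequality in that proof, applied to the Hersch test functions $\widetilde f_i$ and summed over $i$ using $\sum_i\widetilde f_i^2=1$, must be an equality. The discarded terms were:
\begin{itemize}
\item $\mathcal{H}^2\ul{\theta}^{-2}\big(\newbtr{\accentset{\circ}{\ul{\chi}}}^2+\overline{\Ric}(\ul{L},\ul{L})\big)\ge0$, which uses $\mathcal{H}^2\ge0$ and the NEC;
\item $-G(\ul{L},L)\ge0$, which uses the DEC;
\item $2\btr{\tau}^2\ge0$.
\end{itemize}
Since $\sum_i\widetilde f_i^2=1$ pointwise, each of these terms vanishes identically on $\Sigma$ (by continuity). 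Thus
\[
\tau\equiv0,\qquad G(\ul{L},L)\equiv0,\qquad \accentset{\circ}{\ul{\chi}}=0 \text{ and } \overline{\Ric}(\ul{L},\ul{L})=0 \text{ on } \{\mathcal{H}^2>0\}.
\]

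\emph{Step 2: deriving $\mathcal{H}^2=2\scal$.} Insert these identities into Remark \ref{bem_stability}. This gives
\[
\overline{\Ric}(\ul{L},L)-\tfrac12\overline{\Riem}(\ul{L},L,L,\ul{L})+\spann{\ul{\theta}^{-1}\accentset{\circ}{\ul{\chi}},\accentset{\circ}{A}}=\tfrac12\mathcal{H}^2-\scal .
\]
Integrating and using Gauss--Bonnet together with $\int_\Sigma\mathcal{H}^2\d\mu=16\pi$, the right-hand side has integral $8\pi-8\pi=0$. The trace-free cross term vanishes on $\{\mathcal{H}^2>0\}$. If it also vanishes on $\{\mathcal{H}^2=0\}$, then the curvature term $\overline{\Ric}(\ul{L},L)-\tfrac12\overline{\Riem}(\ul{L},L,L,\ul{L})$ has zero integral. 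By the sign assumption it then vanishes identically, and we obtain $\mathcal{H}^2=2\scal$ pointwise on $\Sigma$.

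\emph{Main obstacle: the set $\{\mathcal{H}^2=0\}$.} On the interior of this set, $A$ is trace-free but $\accentset{\circ}{\ul{\chi}}$ is not a priori controlled. I would attack this in two ways, in this order.
\begin{itemize}
\item \emph{Euler--Lagrange equation.} Since $c=0$ and the summed quadratic form vanishes while each summand is non-negative, each $\widetilde f_i$ minimizes the quadratic form among functions with $\int_\Sigma f\d\mu=0$. Because $\tau=0$, $J$ is self-adjoint, so $J(\widetilde f_i)=\lambda_i$ for constants $\lambda_i$. Unique continuation should then prevent the degenerate set from having interior unless the extra term vanishes there.
\item \emph{Sign of the cross term.} Alternatively, show that $\spann{\ul{\theta}^{-1}\accentset{\circ}{\ul{\chi}},\accentset{\circ}{A}}$ has the same sign as the curvature term on that set, so that the integral argument above still forces every term to vanish.
\end{itemize}

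\emph{Step 3: embedding into the Minkowski lightcone.} By uniformization, $\gamma=e^{2u}\sigma$ for the round metric $\sigma$ on $\mathbb{S}^2$. Embed $\Sigma$ into the standard lightcone $\{t=r\}$ of $(\R^{1,3},\eta)$ as the graph $r=e^{u}$; its induced metric is $e^{2u}\sigma=\gamma$. Minkowski space is flat and the lightcone is shear-free, so $\accentset{\circ}{\ul{\chi}}=0$ and $\spann{\ul{\chi},\chi}=\tfrac12\ul{\theta}\theta=\tfrac12\mathcal{H}^2$ by \eqref{eq_secondffnulldecomp}. The Gauss equation \eqref{eq_Gauss} then reads $0=\scal-\mathcal{H}^2_{Mink}+\tfrac12\mathcal{H}^2_{Mink}$, so $\eta(\vec{\mathcal{H}}_{Mink},\vec{\mathcal{H}}_{Mink})=2\scal$. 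Combined with Step 2 this equals $\mathcal{H}^2$, which completes the proof.
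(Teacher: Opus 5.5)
\paragraph{Verdict.} Your route is the paper's: equality in the Hersch-function estimate, then the integrated Gauss equation with Gauss--Bonnet and the sign hypothesis, then $\eta(\vec{\mathcal{H}}_{Mink},\vec{\mathcal{H}}_{Mink})=2\scal$ for the lightcone embedding. At the point you flag, the paper simply asserts $\accentset{\circ}{\ul{\chi}}=0$ along $\Sigma$. You are right that equality only yields $\mathcal{H}^2\newbtr{\accentset{\circ}{\ul{\chi}}}^2=0$, so $\spann{\ul{\theta}^{-1}\accentset{\circ}{\ul{\chi}},\accentset{\circ}{A}}$ is uncontrolled on the interior of $\{\mathcal{H}^2=0\}$.

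\paragraph{The gap.} Your Step 2 is explicitly conditional on exactly this point, and neither remedy is carried out, so as written the proposal does not prove the statement. Neither remedy is the right mechanism either. What can be controlled is not the cross term alone but the combination $\mathcal{H}^2-2\scal$. Unique continuation is not needed: applying your Euler--Lagrange observation to all three Hersch functions at once gives a pointwise identity.

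\paragraph{How to close it.} Write $K:=\overline{\Ric}(\ul{L},L)-\frac12\overline{\Riem}(\ul{L},L,L,\ul{L})$ and $\rho:=\sum_j\btr{\nabla\widetilde f_j}^2>0$.
\begin{enumerate}
\item As you noted, each $\widetilde f_i$ minimizes $f\mapsto\int_\Sigma fJ(f)\d\mu$ among mean-zero functions. Since $\tau\equiv0$, $J$ is self-adjoint, so $J(\widetilde f_i)=\lambda_i$ is constant.
\item By Step 1 and Remark \ref{bem_stability}, $J=-2\Delta-\big(\tfrac32\mathcal{H}^2-\scal\big)$ on all of $\Sigma$. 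The cross term is absorbed by the Gauss identity, so nothing is lost on $\{\mathcal{H}^2=0\}$.
\item The map $\widetilde f=(\widetilde f_1,\widetilde f_2,\widetilde f_3)$ is a conformal diffeomorphism onto $\Sbb^2$, hence harmonic: $\Delta\widetilde f_i=-\rho\,\widetilde f_i$.
\item Therefore $\psi\,\widetilde f_i=\lambda_i$ with $\psi:=2\rho-\tfrac32\mathcal{H}^2+\scal$. Evaluating at the points where $\widetilde f=(1,0,0)$ and $\widetilde f=(0,1,0)$ gives $\lambda_1=\lambda_2=\lambda_3=0$, hence $\psi\equiv0$:
\[
\tfrac32\mathcal{H}^2-\scal=2\rho>0\quad\text{on }\Sigma .
\]
\end{enumerate}
This gives two pointwise facts. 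On $\{\mathcal{H}^2=0\}$ one has $\mathcal{H}^2-2\scal=4\rho>0$. On $\overline{\{\mathcal{H}^2>0\}}$, continuity gives $\accentset{\circ}{\ul{\chi}}=0$, so the Gauss equation gives $\mathcal{H}^2-2\scal=2K$.
\begin{itemize}
\item If $K\le0$: a boundary point $p$ of $\{\mathcal{H}^2>0\}$ would satisfy $0\ge2K(p)=4\rho(p)>0$, which is impossible. So $\{\mathcal{H}^2>0\}$ is open and closed. It is nonempty because $\int_\Sigma\mathcal{H}^2\d\mu=16\pi$, hence it is all of $\Sigma$.
\item If $K\ge0$: then $\mathcal{H}^2-2\scal\ge0$ on $\Sigma$, and its integral is zero by Gauss--Bonnet. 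Hence it vanishes identically, which is impossible at any point where $\mathcal{H}^2=0$.
\end{itemize}
In both cases $\mathcal{H}^2>0$ everywhere, so $\accentset{\circ}{\ul{\chi}}\equiv0$ and your Step 2 goes through.

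\paragraph{By-product.} Combining $\mathcal{H}^2=2\scal$ with the displayed identity gives $\scal=\rho$. This forces $\gamma$ to be a constant multiple of the round metric $\widetilde f^{*}\widehat{\gamma}$. So this route also shows $\Sigma$ is round, which Remark \ref{bem_rigidity} only anticipates.
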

	\begin{bem}\label{bem_rigidity}
		We note that the assumption on the sign of $\overline{\Ric}(\ul{L},L)-\frac{1}{2}\Riem(\ul{L},L,L,\ul{L})$ is closely related to the assumption in the recent rigidity result by Pe\~nuela Diaz in \cite[Theorem 4.1]{penueladiaz}. While we essentially show that $\mathcal{N}$ agrees with the Minkowski lightcone along $\Sigma$, the characterization by Pe\~nuela Diaz derived from the codimension-$2$ notion of stability goes further. That is, Pe\~nuela Diaz shows that $\Sigma$ is a round sphere and that any spacelike hypersurface $\Omega$ in $\overline{M}$ with $\partial\Omega=\Sigma$ embeds isometrically into the Minkowski spacetime. By analogy, although beyond the scope of this paper, it is thus reasonable to expect that the assumptions in Proposition \ref{prop_rigidity} in fact imply that $\mathcal{N}$ is (a part of) the standard Minkowski lightcone. The fact that any stable cross section of the Minkowski lightcone is a round sphere is established in Corollary \ref{kor_mainresult} below. In particular, $\mathcal{H}^2$ is necessarily a strictly positive constant.
	\end{bem}
	
	\begin{proof}
		By assumption, $\Sigma$ is a topological $2$-sphere. Hence the uniformization theorem implies that its induced metric $\gamma$ is conformally round, i.e., $\gamma=\omega^2\widehat{\gamma}$ for some positive function $\omega\colon\Sbb^2\to(0,\infty)$ where $\widehat{\gamma}$ denotes the standard round metric.
		It is a well-known fact that $\Sigma$ then embeds isometrically into the (future-directed and past- directed) standard lightcone in the Minkowski spacetime $(\R^{1,3},\eta)$ via the identification $\Sigma=\{\pm t=r=\omega\}$. See for example \cite{wolff1}. See also Section \ref{sec_classS} below. By the Gauss Equation in the Minkowski spacetime, see e.g. \cite[Remark 1]{wolff1}, we have
		\[
			\eta(\vec{\mathcal{H}}_{Mink},\vec{\mathcal{H}}_{Mink})=2\scal,
		\]
		so it remains to show that $\mathcal{H}^2=2\scal$. By the Gauss Equation \eqref{eq_Gauss} in $(\overline{M},\overline{g})$ we have
		\[
			\mathcal{H}^2-2\scal=2{G}(\ul{L},L)+2\spann{\ul{\theta}^{-1}\accentset{\circ}{\ul{\chi}},\accentset{\circ}{A}}+2\overline{\Ric}(\ul{L},L)-\overline{\Riem}(\ul{L},L,L,\ul{L})
		\]
		Note that by Proposition \ref{prop_hawkingenergy}, $m_H(\Sigma)=0$ shows that stability necessarily holds with constant $c=0$. Then, using the same test functions as above a closer examination of the integration by parts in Remark \ref{bem_stability} yields the following chain of inequalities
		\begin{align*}
			\frac{3}{2}\int_\Sigma\mathcal{H}^2\d\mu\le \int_\Sigma \mathcal{H}^2\left[1+\ul{\theta}^{-2}\left(\newbtr{\ul{\chi}}^2+\overline{\Ric}(\ul{L},\ul{L})\right)\right]-{G}(\ul{L},L)+2\btr{\tau}^2\d\mu \le 24\pi.
		\end{align*}
		By assumption, equality now holds everywhere. In particular, we find that $\newbtr{\accentset{\circ}{\ul{\chi}}}=0$ and ${G}(\ul{L},L)=0$ along $\Sigma$ as we assume the DEC. The above Gauss Equation hence simplifies to 
		\[
			\mathcal{H}^2-2\scal=2\overline{\Ric}(\ul{L},L)-\overline{\Riem}(\ul{L},L,L,\ul{L}).
		\]
		Further, by assumption and the Gauss--Bonnet theorem integration now yields that
		\[
			\int_\Sigma2\overline{\Ric}(\ul{L},L)-\overline{\Riem}(\ul{L},L,L,\ul{L})\d\mu =0.
		\]
		However, $2\overline{\Ric}(\ul{L},L)-\overline{\Riem}(\ul{L},L,L,\ul{L})$ has a fixed sign by assumption, so it in fact must vanish identically. Hence, $\mathcal{H}^2=2\scal$.
	\end{proof}
	
	\section{The stability operator in Class $\mathcal{S}$}\label{sec_classS}
	
	We now consider a class $\mathcal{S}$ of spacetimes of the form $\overline{M}=\R\times I\times \Sbb^2$ and 
	\[
		\overline{g}=-h(r)\d t^2+\frac{1}{h(r)}\d r^2+r^2\widehat{\gamma},
	\]
	where $h\colon (0,\infty)\to\R$ is a smooth function and $I$ is an open interval on which $\btr{h}$ is strictly positive. Spacetimes of class $\mathcal{S}$ have been widely studied and although they are highly rigid they contain many physically relevant toy models such as the Schwarzschild spacetime corresponding to $h(r)=1-\frac{2m}{r}$. See for example \cite{cederbaumwolff} and the references given therein.
	
	We observe that any zeros $r_i$ of $h$ correspond to a Killing horizon in a suitable spacetime extension. If the Killing horizons are non-degenerate, here $h'(r_i)\not=0$, it is a well-known fact that the spacetime admits an extension in spirit of the Kruskal--Szekeres extension of the Schwarzschild spacetime. See e.g. \cite{brillhayward, cederbaumwolff,schindagui}. In a recent work \cite{cederbaumwolff}, Cederbaum and the author have revisited the construction which naturally gives rise to (global) double null coordinates $(u,v)$. For $v\not=0$, $u\not=0$, the level sets are null cones, see e.g. \cite{wolff6}, also referred to as the principal null hypersurfaces, and the level sets $u=0$, $v=0$ corresponds to Killing horizons. We refer to \cite{wolff6} where the following has been worked out in detail. Here, we mainly state the relevant facts that hold for any principal null hypersurface $\mathcal{N}=\{v=v_0\not=0\}$ or $\mathcal{N}=\{u=u_0\not=0\}$:
	
	There exists a choice of geodesic null generator $\ul{L}$ of $\mathcal{N}$ such that any spacelike cross section $\Sigma$ can be identified with a graph over the radial coordinate, i.e., $\Sigma=\{r=\omega\}\cap \mathcal{N}$ with $\omega\colon\Sbb^2\to(0,\infty)$. Note that $\omega$ is simultaneously the conformal factor of the induced metric $\gamma=\omega^2\widehat{\gamma}$. Moreover, for any spacelike cross section $\Sigma$ of $\mathcal{N}$ we have
	\begin{align*}
		\ul{\chi}&=\omega\widehat{\gamma},\\
		\ul{\theta}&=\frac{2}{\omega}>0,\\
		\tau&= 0,\\
		\mathcal{H}^2&=2\scal-\frac{4(1-h(\omega))}{\omega^2}.
	\end{align*}
	We leave it as an exercise to the reader that in this setting the Jacobi operator for a spacelike cross section of $\mathcal{N}$ becomes
	\[
		J(f)=-2\Delta f-\left(\mathcal{H}^2-\frac{2}{\omega}h'(\omega)\right)f
	\]
	by direct computation. We note that all of the above identities are also well-known for the Minkowski lightcone, see e.g. \cite{wolff1}, corresponding to the choice $h\equiv 1$.
	
	Using Hersch's lemma \cite{hersch} as before as well as the above identity for $J$ and $\mathcal{H}^2$ as above, stability in the sense of Definition \ref{defi_stability} (with $c=0$) now implies 
	\[
		0\le \int_\Sigma \frac{(1-h(\omega))}{\omega^2}+\frac{2h'(\omega)}{\omega}\d\mu,
	\]
	i.e., any stable spacelike cross section $\Sigma$ must lie in a region of $\mathcal{N}$ where the above function with respect to $h$ integrates to a non-negative number. While this rules out stable cross sections in the Schwarzschild lightcone of negative mass $m<0$, the above identity is trivially satisfies in the Minkowski lightcone and the Schwarzschild lightcone of positive mass $m>0$. That is, there is a plethora of meaningful choices for $h$ for which the usual trick via Hersch's lemma does not yield any geometric information on stable spacelike cross sections at all.
	
	We recall that the conformal mapping chosen in Hersch's lemma can also be motivated from a notion of conformal center of mass for topological $2$-spheres introduced by Onofri \cite{onofri}. Essentially equivalent to the choice of conformal mapping in \cite{hersch}, Onofri \cite{onofri} showed that one can always choose a suitable conformal diffeomorphism on $\Sbb^2$, i.e., a suitable M\"obius transformation, such that there exists a conformal factor $w$ satisfying
	\[
		\int_{\Sbb^2}w^2f_i\d\widehat{\mu}=0
	\]
	for all $i=1,2,3$ where the $f_i$ denote (a choice of) first spherical harmonics and $\d\widehat{\mu}$ is the area element of the standard round sphere $\widehat{\gamma}$. Indeed, the well-known formulas 
	\[
		\sum\limits_{i=1}^3f_i^2=1,\text{ }\sum\limits_{i=1}^r\btr{\widehat{\nabla}f_i}_{\widehat{\gamma}}=2
	\]
	directly imply the properties of the functions in Hersch's lemma.
	
	In a similar spirit, the author showed in \cite{wolff4} that one can always choose a M\"obius transformation such that the conformal factor $w$ with respect to these coordinates satisfies the balancing condition
	\begin{align}\label{eq_balanced}
		\int_{\Sbb^2}w^3f_i\d\widehat{\mu}=0
	\end{align}
	for (a choice of) first spherical harmonics $f_i$. The balancing condition \eqref{eq_balanced} is derived from a timelike $4$-vector defined for spacelike cross sections of the Minkowski lightcone. Projecting this Minkowskian $4$-vector to the hyperboloid (of radius $1$), one can uniquely associate a vector $\vec{a}\in\R^3$ to a given conformally round metric $\omega^2\widehat{\gamma}$. See also \cite{cederbaumcortiersakovich}. As this Minkowskian $4$-vector is shown to transform equivariantly under Lorentz transformations in $\operatorname{SO}^+(1,3)$, which is isomorphic to the M\"obius group, the balancing condition \eqref{eq_balanced} is then always satisfied by
	\begin{align}\label{eq_conformalW}
		w(\vec{x})=\frac{\omega\circ\Phi_{-\vec{a}}(\vec{x})}{\sqrt{1+\btr{\vec{a}}^2}+\vec{a}\cdot\vec{x}}.
	\end{align}
	Here $\Phi_{-\vec{a}}$ is the M\"obius transformation uniquely determined by the Lorentz transformation in direction $-\vec{a}$ via the isomorphism of $\operatorname{SO}^+(1,3)$ and the M\"obius group. Morover, $\Phi_{-\vec{a}}$ also induces the corresponding change of coordinates, i.e., $\Phi_{-\vec{a}}$ is the conformal diffeomorphism such that $\gamma=\omega^2\widehat{\gamma}$ becomes $\gamma=w^2\widehat{\gamma}$ after this change of coordinates. We refer the interested reader to \cite[Section 3]{wolff4}. Here, we merely stress again that while the Minkowskian $4$-vector is understood from an extrinsic perspective in \cite{wolff4}, the balancing condition \eqref{eq_balanced} can also be derived from a purely intrinsic perspective for a conformally round surface. Thus, although Lorentz transformations are in general not isometries of the ambient spacetime of class $\mathcal{S}$, for a given spacelike cross section $\Sigma$ in $\mathcal{N}$ as above we can always choose coordinates that the induced metric satisfies $\gamma=w^2\widehat{\gamma}$ with $w$ satisfying \eqref{eq_balanced}. While this \emph{balanced} conformal factor no longer corresponds to the graph function $\omega$, we note that \eqref{eq_conformalW} yields that
	\begin{align}\label{eq_conformalOmega}
		\omega(\vec{x})=\frac{w\circ\Phi_{\vec{a}}(\vec{x})}{\sqrt{1+\btr{\vec{a}}^2}-\vec{a}\cdot\vec{x}}
	\end{align}
	with respect to the balanced coordinates on $\Sigma \cong \Sbb^2$. As a consequence, the balancing condition \eqref{eq_balanced} yields new test functions for stable surfaces:
	
	\begin{prop}\label{prop_gradientestimate}
		Let $\mathcal{N}$ be a principal null hypersurface in a spacetime of class $\mathcal{S}$. Let $\Sigma$ be a stable spacelike cross section in $\mathcal{N}$ with graph function $\omega$. Then
		\[
			\int_{\Sbb^2}\btr{\widehat{\nabla}w}_{\widehat{\gamma}}^2\d\widehat{\mu}\le \int_{\Sbb^2}\left(\frac{(1-h(\omega))}{\omega^2}+\frac{h'(\omega)}{2\omega}\right)w^4\d\widehat{\mu}
		\]
		for any conformal factor $w$ satisfying \eqref{eq_balanced}.
	\end{prop}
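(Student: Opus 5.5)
The plan is to test the stability inequality of Definition~\ref{defi_stability} (with $c=0$) against the three functions $\varphi_i:=w f_i$, $i=1,2,3$, written in the balanced coordinates in which $\gamma=w^2\widehat{\gamma}$. Here $f_i$ are the first spherical harmonics from \eqref{eq_balanced}. Since $\d\mu=w^2\d\widehat{\mu}$, the balancing condition \eqref{eq_balanced} gives
\[
\int_\Sigma\varphi_i\d\mu=\int_{\Sbb^2}w^3f_i\d\widehat{\mu}=0 .
\]
So each $\varphi_i$ is admissible, and it is certainly in $W^{1,2}$ since $w$ is smooth and positive. Because $\tau=0$ in Class $\mathcal{S}$, and using the formula $J(f)=-2\Delta f-\bigl(\mathcal{H}^2-\tfrac{2}{\omega}h'(\omega)\bigr)f$, stability reads
\[
0\le\int_\Sigma 2\btr{\nabla\varphi_i}^2-\Bigl(\mathcal{H}^2-\tfrac{2}{\omega}h'(\omega)\Bigr)\varphi_i^2\d\mu .
\]
We then sum over $i$.

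\textbf{Gradient term.} The Dirichlet energy is conformally invariant in dimension two, so $\int_\Sigma\btr{\nabla\varphi_i}^2\d\mu=\int_{\Sbb^2}\btr{\widehat{\nabla}\varphi_i}^2_{\widehat{\gamma}}\d\widehat{\mu}$. We use the identities $\sum f_i^2=1$, $\sum f_i\widehat{\nabla}f_i=0$ and $\sum\btr{\widehat{\nabla}f_i}^2=2$. These give $\sum_i\btr{\widehat{\nabla}(wf_i)}^2=\btr{\widehat{\nabla}w}^2+2w^2$, so the summed gradient term equals $2\int\btr{\widehat{\nabla}w}^2+4\int w^2$.

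\textbf{Potential term.} Since $\sum\varphi_i^2=w^2$, this term is $\int_{\Sbb^2}\bigl(\mathcal{H}^2-\tfrac{2}{\omega}h'(\omega)\bigr)w^4\d\widehat{\mu}$. We insert $\mathcal{H}^2=2\scal-\tfrac{4(1-h(\omega))}{\omega^2}$ and the conformal formula $\scal=2w^{-2}(1-\widehat{\Delta}\ln w)$. Integrating by parts,
\[
\int 2\scal\, w^4\d\widehat{\mu}=4\int w^2\d\widehat{\mu}+4\int\widehat{\nabla}(w^2)\cdot\widehat{\nabla}\ln w\d\widehat{\mu}=4\int w^2\d\widehat{\mu}+8\int\btr{\widehat{\nabla}w}^2\d\widehat{\mu} .
\]
The $4\int w^2$ terms then cancel against the gradient side.

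\textbf{Conclusion.} What remains is an inequality of the form
\[
\text{const}\cdot\int\btr{\widehat{\nabla}w}^2\d\widehat{\mu}\le\int\Bigl(\frac{4(1-h(\omega))}{\omega^2}+\frac{2h'(\omega)}{\omega}\Bigr)w^4\d\widehat{\mu} ,
\]
where $\omega$ is understood via \eqref{eq_conformalOmega} as a function on the balanced sphere. Dividing by the constant gives the claimed estimate.

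\textbf{Main obstacle.} The delicate point is bookkeeping the numerical constants. The bookkeeping above gives the constant $6$, since $8-2=6$, whereas the statement as written corresponds to the constant $4$. So this factor, together with the normalizations in $\mathcal{H}^2$, $\scal$ and $J$, has to be rechecked carefully before committing to the exact coefficients. A further point needing care is the justification that $h(\omega)$ and $h'(\omega)$ are evaluated pointwise on $\Sigma$, independently of the Möbius change of coordinates. This holds because all the quantities involved are scalar functions on $\Sigma$.
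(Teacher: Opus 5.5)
Your approach is the paper's: test functions $wf_i$ in balanced coordinates, the identities $\sum f_i^2=1$, $\sum f_i\widehat{\nabla}f_i=0$, $\sum\btr{\widehat{\nabla}f_i}^2=2$, conformal invariance of the Dirichlet energy, and integration by parts of the curvature term. Your bookkeeping is correct where the paper's is not. The paper uses $w^2\scal=2+2\btr{\nabla w}^2-w\Delta w$. From $\scal=2w^{-2}(1-\widehat{\Delta}\ln w)$, which is what you use, one gets instead $w^2\scal=2+2\btr{\nabla w}^2-2w\Delta w$. The paper's version would give $\int_{\Sbb^2}w^2\scal\d\widehat{\mu}=8\pi+\int_{\Sbb^2}\btr{\widehat{\nabla}\ln w}^2\d\widehat{\mu}$, which contradicts Gauss--Bonnet for non-constant $w$. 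That slip is why the paper ends with $2\int_\Sigma\btr{\nabla w}^2\d\mu$ on the left. The correct coefficient is $3$, which is your $6$ in your normalization.

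The gap in your proposal is the final deduction. You leave the mismatch as an open ``obstacle'', so ``dividing by the constant gives the claimed estimate'' is not justified as written. The fix is one line. Set $X:=\int_{\Sbb^2}\bigl(\frac{1-h(\omega)}{\omega^2}+\frac{h'(\omega)}{2\omega}\bigr)w^4\d\widehat{\mu}$. Your computation gives
\[
6\int_{\Sbb^2}\btr{\widehat{\nabla}w}_{\widehat{\gamma}}^2\d\widehat{\mu}\le 4X .
\]
The left-hand side is non-negative, so $X\ge 0$. Hence
\[
\int_{\Sbb^2}\btr{\widehat{\nabla}w}_{\widehat{\gamma}}^2\d\widehat{\mu}\le\frac{2}{3}X\le X,
\]
which is the stated estimate, in fact with a better constant. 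No renormalization of $\mathcal{H}^2$, $\scal$ or $J$ is needed; add this line in place of the hedge and the proof is complete.
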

	\begin{bem}\label{bem_gradientestimate}
		Since the underlying Minkowskian center transforms equivariantly under Lorentz transformations, we note that any two conformal factors $w_1$, $w_2$ satisfying \eqref{eq_balanced} are related by an isometry on $\Sbb^2$, i.e., a rotation.
	\end{bem}
	\begin{proof}
		Using the explicit formulas for $\mathcal{H}^2$ and $J$ as above, a spacelike cross section $\Sigma$ is stable in $\mathcal{N}$ if and only if
		\[
			\int_\Sigma \scal f^2\d\mu\le \int_\Sigma \btr{\nabla f}^2+\left(\frac{2(1-h(\omega))}{\omega^2}+\frac{h'(\omega)}{\omega}\right)f^2\d\mu
		\]
		for all $f\in W^{1,2}(\Sigma)$ with $\int_\Sigma f\d\mu=0$. Using a M\"obius transformation as above, we may shift to coordinates such that $\gamma=w^2\widehat{\gamma}$ where the conformal factor $w$ satisfies the balancing condition \eqref{eq_balanced} and where the graph function $\omega$ on $\Sigma\cong\Sbb^2$ is now expressed via \eqref{eq_conformalOmega}. In particular,
		\[
			\int_{\Sigma}wf_i\d\mu=0
		\]
		for first spherical harmonics $f_i$, $i=1,2,3$. We recall that
		\[
			\sum\limits_{i=1}^3f_i^2=1,\text{ and }\sum\limits_{i=1}^3\btr{\widehat{\nabla}f_i}_{\widehat{\gamma}}^2=2,
		\]
		and we moreover observe that
		\[
			\sum\limits_{i=1}^3f_i\widehat{\gamma}(\widehat{\nabla} f_i,\widehat{\nabla}w)=0
		\]
		for any $C^1$ function $w$.\footnote{An easy way to verify this directly is in angular coordinates $(\theta,\varphi)\in(0,\pi)\times(0,2\pi)$ where $\widehat{\gamma}=\d\theta^2+\sin^2\theta^2\d\varphi^2$, $f_1=\sin\theta\cos\varphi$, $f_2=\sin\theta\sin\varphi$, $f_3=\cos\theta$.} Observe further that
		\[
			\int_\Sigma\btr{\nabla f}^2\d\mu=\int_{\Sbb^2}\btr{\widehat{\nabla}f}_{\widehat{\gamma}}^2\d\widehat{\mu}
		\]
		for any $C^1$ function $f$. Hence, using $wf_i$ as test functions for the stability and summing over $i$, the above identities and the conformal change of the area element yield that
		\[
			\int_\Sigma \scal w^2\d\mu\le \int_\Sigma 2+\btr{\nabla w}^2+\left(\frac{2(1-h(\omega))}{\omega^2}+\frac{h'(\omega)}{\omega}\right)w^2\d\mu.
		\]
		As $\gamma=w^2\widehat{\gamma}$, the well-known formula for the scalar curvature yields
		\[
			w^2\scal=2+2\btr{\nabla w}^2-w\Delta w.
		\]
		Hence, integration by parts and simplifying on both sides yields
		\[
			2\int_\Sigma\btr{\nabla w}^2\d\mu \le \int_\Sigma \left(\frac{2(1-h(\omega))}{\omega^2}+\frac{h'(\omega)}{\omega}\right)w^2\d\mu.
		\]
		Invoking the conformal change of the gradient and area element once again, the claim follows.
	\end{proof}
	
	The main application of the $L^2$-bound in Proposition \ref{prop_gradientestimate} here is a characterization of stable spacelike cross sections in the Minkowski lightcone.
	
	\begin{kor}\label{kor_mainresult}
		Let $\mathcal{N}$ be the standard lightcone in the Minkowski spacetime $\R^{1,3}$, $\Sigma$ a stable spacelike cross section in $\mathcal{N}$. Then $\Sigma$ is an STCMC surface, i.e., a round sphere with constant scalar curvature.
	\end{kor}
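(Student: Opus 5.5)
In the Minkowski lightcone $h\equiv 1$, so $h'\equiv 0$ and $1-h(\omega)\equiv 0$. The plan is to insert this into Proposition \ref{prop_gradientestimate}. For any balanced conformal factor $w$ satisfying \eqref{eq_balanced}, the right-hand side vanishes identically, and we obtain
\[
\int_{\Sbb^2}\btr{\widehat{\nabla}w}^2_{\widehat{\gamma}}\d\widehat{\mu}\le 0 .
\]
Hence $w$ is constant, say $w\equiv w_0>0$. In the balanced coordinates the induced metric is then $\gamma=w_0^2\widehat{\gamma}$, which is a round metric with constant scalar curvature $\scal=2/w_0^2$.

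Next I return to the graph function. Formula \eqref{eq_conformalOmega} gives
\[
\omega(\vec{x})=\frac{w_0}{\sqrt{1+\btr{\vec{a}}^2}-\vec{a}\cdot\vec{x}}
\]
for some $\vec{a}\in\R^3$, which is exactly the graph function of a Lorentz-boosted round sphere in the lightcone. The quantity to control is the spacetime mean curvature. In class $\mathcal{S}$ with $h\equiv1$ we have $\mathcal{H}^2=2\scal-4(1-h(\omega))/\omega^2=2\scal$. Since $\scal$ is intrinsic and equals $2/w_0^2$, this gives $\mathcal{H}^2=4/w_0^2$, a positive constant. So $\Sigma$ is an STCMC surface, and intrinsically a round sphere of constant scalar curvature. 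Extrinsically it is the intersection of the lightcone with a spacelike hyperplane, i.e.\ a boosted coordinate sphere; if "round" is meant in that sense, the displayed formula for $\omega$ already gives it.

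The only step needing care is the applicability of Proposition \ref{prop_gradientestimate}. Stability there is used with $c=0$, and any stable surface with $c\ge0$ is in particular stable with $c=0$. Moreover, the existence of a balanced conformal factor is guaranteed by \cite{wolff4} for any conformally round metric, and $\Sigma$ is a topological sphere here. With these two facts in hand the argument is immediate. I expect no genuine obstacle: the main result is essentially the observation that Proposition \ref{prop_gradientestimate} becomes a rigidity statement once $h\equiv 1$.
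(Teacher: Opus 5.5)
Your proposal is correct and follows the paper's proof. With $h\equiv 1$ the right-hand side of Proposition \ref{prop_gradientestimate} vanishes, which forces the balanced conformal factor to be constant, and then $\mathcal{H}^2=2\scal$ is a positive constant. The only cosmetic difference is that you read off constant scalar curvature directly from $\gamma=w_0^2\widehat{\gamma}$ in balanced coordinates. The paper instead passes through the explicit form of $\omega$ from \eqref{eq_conformalOmega} and a cited remark, so your route is slightly more direct.
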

	\begin{bem}\label{main_result}
		Note that all STCMC surfaces in the Minkowski lightcone have been characterized as round spheres by Wang \cite{wang}. See also \cite{chenwang} and a recent proof by Palomo--Romero \cite{palomoromero}. It is easy to verify that all round spheres are stable and Corollary \ref{kor_mainresult} can be thus understood as an equivalence statement.
	\end{bem}
	\begin{proof}
		Since $\R^{1,3}$ corresponds to $h=1$, we have $\mathcal{H}^2=2\scal$ and Proposition \ref{prop_gradientestimate} implies that the balanced conformal factor $w$ is constant, i.e, $w=\rho>0$. Then, \eqref{eq_conformalOmega} implies that
		\[
			\omega(\vec{x})=\frac{\rho}{\sqrt{1+\btr{\vec{a}}^2}-\vec{a}\cdot\vec{x}}.
		\]
		In particular, $\gamma=\omega^2\widehat{\gamma}$ has constant scalar curvature, see e.g. \cite[Remark 2]{wolff1}, i.e., $\Sigma$ is a round sphere and STCMC surface in $\mathcal{N}$.
	\end{proof}
	
	\section{An explicit variation derived from Lorentz transformations}\label{sec_Lorentztrafo}
	
	The previous sections have already highlighted the significance of conformal diffeomorphisms when discussing the stability of topological $2$-spheres. Since any topological $2$-sphere is conformally round by the uniformization theorem, it admits an isometric embedding into the standard Minkowski lightcone as demonstrated above. In particular, we may utilize Lorentz transformations to generate a canonical variation along the Minkowski lightcone. While we give no immediate application of this here, we collect all the relevant identities for use in the future and observe some interesting geometric properties. Again, we emphasize that this extrinsic variation is equivalent to an intrinsic change of coordinates so applications are not limited to the Minkowski lightcone.
	
	Let $\Sigma$ be a spacelike cross section of the (future-/ past- directed) Minkowski lightcone, where we recall that without loss of generality we may always assume that ${\Sigma=\{\pm t=r=\omega\}}$ and the induced metric $\gamma$ of $\Sigma$ satisfies $\gamma=\omega^2\widehat{\gamma}$ with respect to the graph function\linebreak ${\omega\colon\Sbb^2\to(0,\infty)}$. Let $\vec{a}\in \R^3$ be a unit vector and consider a family of Lorentz transformations $\{L_{\vec{a},t}\}_{t\in(-1,1)}$ in direction $t\vec{a}$. Since the lightcone itself is invariant under Lorentz transformations in $\operatorname{SO}^+(1,3)$, this yields a smooth family of spacelike cross sections $\Sigma_{\vec{a},t}:=L_{\vec{a},t}(\Sigma)$. Note that $\Sigma=\Sigma_{\vec{a},0}$ and that $\Sigma_{\vec{a},t}=\Sigma_{\omega_{\vec{a},t}}$ with
	\[
	\omega_{\vec{a},t}=\frac{\omega\circ\Phi_{\vec{a},t}}{\sqrt{1+t^2}-t\vec{a}\cdot \vec{x}},
	\]
	where $\Phi_{\vec{a},t}$ is a uniquely determined M\"obius transformation on $\Sbb^2$, see e.g. \cite[Section 3]{wolff4}. Although the precise form of the variation in $t$
	\[
	\frac{\d}{\d t}\omega_{\vec{a},t}=\frac{\widehat{\gamma}(\widehat{\nabla}\omega,\frac{\d}{\d t}\Phi_{\vec{a},t})}{\sqrt{1+t^2}-t\vec{a}\cdot\vec{x}}-\frac{\omega_{\vec{a},t}}{\sqrt{1+t^2}-t\vec{a}\cdot\vec{x}}\left(\frac{t}{\sqrt{1+t^2}}-\vec{a}\cdot\vec{x}\right)
	\]
	is not very instructive at first sight, we recall the well-known facts that the area is preserved and the scalar curvature changes via a conformal diffeomorphism, i.e., 
	\begin{align*}
		\btr{\Sigma_{\vec{a},t}}&=\btr{\Sigma},\\
		\scal_{\vec{a},t}&=\scal_{\vec{a},0}\circ \Phi_{\vec{a},t}.
	\end{align*}
	Again, see for example \cite[Section 3]{wolff4}. We now define 
	\[
		f_{\vec{a}}:=\frac{2}{\omega}\frac{\d}{\d t}\omega_{\vec{a},t}\vert_{t=0}=2\widehat{\gamma}(\widehat{\nabla}\ln\omega,\frac{\d}{\d t}\Phi_{\vec{a},t}\vert_{t=0})+2 \vec{a}\cdot\vec{x}.
	\]
	Since we have implicitly chosen the null generator $\ul{L}$ such that $\ul{\theta}=\frac{2}{\omega}$, and $\mathcal{H}^2=2\scal$ by the Gauss equation in Minkowski, the variation formulae at the beginning of Section \ref{sec_stability} and the above identities for the area and scalar curvature along the variation yield that
	\begin{align}\label{eq_explicit_fa_1}
		\int_\Sigma f_{\vec{a}}\d\mu=0
	\end{align}
	and
	\begin{align}\label{eq_explicit_fa_2}
		-2\Delta f_{\vec{a}}-2\scal f_{\vec{a}}=J(f_{\vec{a}})=\frac{\d}{\d t}\mathcal{H}^2_{\vec{a},t}=2\widehat{\gamma}(\widehat{\nabla}R_\Sigma,\frac{\d}{\d t}\Phi_{\vec{a},t}\vert_{t=0}).
	\end{align}
	We are ready to state our first observation:
	\begin{prop}
		Let $(\Sigma,\gamma)$ be a topological $2$-sphere. For all $\vec{a}\in\Sbb^2$, we have
		\[
			\int_{\Sigma}f_{\vec{a}}\d\mu=\int_{\Sigma}\scal\cdot f_{\vec{a}}\d\mu=0.
		\]
	\end{prop}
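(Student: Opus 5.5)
The plan is to realize $\Sigma$ inside the Minkowski lightcone and differentiate the two conserved quantities along the Lorentz-boost variation. By the uniformization theorem, $\gamma=\omega^2\widehat{\gamma}$ for some $\omega\colon\Sbb^2\to(0,\infty)$. Hence, as in the proof of Proposition \ref{prop_rigidity}, $\Sigma$ embeds isometrically as $\{\pm t=r=\omega\}$ in the Minkowski lightcone. There we use the family $\Sigma_{\vec{a},t}=L_{\vec{a},t}(\Sigma)$ and the test function $f_{\vec{a}}$ defined above. The first identity, $\int_\Sigma f_{\vec{a}}\d\mu=0$, is exactly \eqref{eq_explicit_fa_1}. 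It follows from differentiating $\btr{\Sigma_{\vec{a},t}}=\btr{\Sigma}$ at $t=0$ and using the first variation formula $\frac{\d}{\d t}\btr{\Sigma}=\int_\Sigma f\d\mu$ for the normalized variation $f\ul{\theta}^{-1}\ul{L}$, since $\ul{\theta}=2/\omega$ identifies the normal speed $\frac{\d}{\d t}\omega_{\vec{a},t}$ with $f_{\vec{a}}\ul{\theta}^{-1}$.

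For the second identity I would use the conformal invariance of total curvature rather than Gauss--Bonnet alone. Since $\mathcal{H}^2=2\scal$ along every cross section of the Minkowski lightcone, we have
\[
\int_{\Sigma_{\vec{a},t}}\mathcal{H}^2_{\vec{a},t}\d\mu_{\vec{a},t}=2\int_{\Sigma_{\vec{a},t}}\scal_{\vec{a},t}\d\mu_{\vec{a},t}=16\pi
\]
for all $t$, by Gauss--Bonnet. Differentiating at $t=0$ gives two contributions. The variation of the area element is $f_{\vec{a}}\d\mu$, which is the pointwise version of the first variation formula (Raychaudhuri: $\partial_t\d\mu=\ul{\theta}\varphi\d\mu$ with $\varphi=f\ul{\theta}^{-1}$). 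The variation of the integrand is $J(f_{\vec{a}})$, by \eqref{eq_explicit_fa_2}. Hence
\[
0=\int_\Sigma J(f_{\vec{a}})\d\mu+\int_\Sigma \mathcal{H}^2 f_{\vec{a}}\d\mu=\int_\Sigma\left(-2\Delta f_{\vec{a}}-2\scal f_{\vec{a}}\right)\d\mu+2\int_\Sigma\scal f_{\vec{a}}\d\mu .
\]
This is an identity that holds automatically ($\int\Delta f_{\vec{a}}=0$), so it gives no information. This is the main obstacle: Gauss--Bonnet is too weak, and a genuinely different quantity is needed.

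Instead I would use the pointwise transformation law $\scal_{\vec{a},t}=\scal\circ\Phi_{\vec{a},t}$ together with the conformal change of the area element. The key fact is that $\Phi_{\vec{a},t}$ is a diffeomorphism which pulls $\gamma_{\vec{a},t}=\omega_{\vec{a},t}^2\widehat{\gamma}$ back isometrically onto $\gamma$. Consequently $\int_{\Sigma_{\vec{a},t}}\scal_{\vec{a},t}^2\d\mu_{\vec{a},t}=\int_\Sigma\scal^2\d\mu$ is independent of $t$. Differentiating gives
\[
0=\int_\Sigma 2\scal\,\partial_t\scal+\scal^2f_{\vec{a}}\d\mu .
\]
With $\partial_t\scal=\frac12J(f_{\vec{a}})=-\Delta f_{\vec{a}}-\scal f_{\vec{a}}$, this becomes
\[
\int_\Sigma\left(-2\scal\Delta f_{\vec{a}}-\scal^2 f_{\vec{a}}\right)\d\mu=0 ,
\]
which is again not yet the claim. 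So the cleaner route I would ultimately take is the simplest invariant, $\int_{\Sigma_{\vec{a},t}}\d\mu=\btr{\Sigma}$, combined with the Jacobi equation \eqref{eq_explicit_fa_2} written as $J(f_{\vec{a}})=2\,\widehat{\gamma}(\widehat{\nabla}\scal,V)$, where $V=\frac{\d}{\d t}\Phi_{\vec{a},t}\vert_{t=0}$. Integrating \eqref{eq_explicit_fa_2} over $\Sigma$ and using $\int\Delta f_{\vec{a}}=0$ gives
\[
-2\int_\Sigma\scal f_{\vec{a}}\d\mu=2\int_\Sigma\widehat{\gamma}(\widehat{\nabla}\scal,V)\d\mu .
\]
It therefore suffices to show that the right-hand side equals $-\int_\Sigma \scal f_{\vec{a}}\d\mu$, forcing both sides to vanish. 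For this, write $\d\mu=\omega^2\d\widehat{\mu}$ and integrate by parts on $\Sbb^2$:
\[
\int \widehat{\gamma}(\widehat{\nabla}\scal,V)\omega^2\d\widehat{\mu}=-\int\scal\,\widehat{\dive}(\omega^2V)\d\widehat{\mu} .
\]
Now $V$ is the conformal Killing field $\vec{a}-(\vec{a}\cdot\vec{x})\vec{x}$, possibly up to sign, so $\widehat{\dive}V=-2\vec{a}\cdot\vec{x}$. Then
\[
\widehat{\dive}(\omega^2V)=\omega^2\left(2\widehat{\gamma}(\widehat{\nabla}\ln\omega,V)-2\vec{a}\cdot\vec{x}\right),
\]
which is $\omega^2 f_{\vec{a}}$ up to the sign of the $\vec{a}\cdot\vec{x}$ term. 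Fixing these sign conventions is the delicate step. Once $\widehat{\dive}(\omega^2V)=\omega^2f_{\vec{a}}$ is confirmed, the right-hand side equals $-2\int_\Sigma\scal f_{\vec{a}}\d\mu$. Since the left-hand side is also $-2\int_\Sigma\scal f_{\vec{a}}\d\mu$, this alone gives no conclusion, and the sign bookkeeping must in fact produce a relative factor different from one. The cleanest way to settle this is to note that the same divergence identity with $\scal$ replaced by $1$ reproduces \eqref{eq_explicit_fa_1}, which calibrates the sign. Combining it with the pointwise formula for $J(f_{\vec{a}})$ and $\int\scal\,\d\mu=4\pi$ should then yield $\int_\Sigma\scal f_{\vec{a}}\d\mu=0$. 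I expect this sign calibration to be the main work.
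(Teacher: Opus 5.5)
Your argument for $\int_\Sigma f_{\vec{a}}\d\mu=0$ is fine and agrees with the paper. The second identity, however, is never proved, and the route you settle on cannot work. There is no sign discrepancy to find. With the paper's convention $X_{\partial_3}=\sin\theta\,\partial_\theta$, i.e.\ $X_{\vec{a}}=-\widehat{\nabla}(\vec{a}\cdot\vec{x})$ and $\widehat{\dive}X_{\vec{a}}=2\,\vec{a}\cdot\vec{x}$, one has exactly $\widehat{\dive}(\omega^2X_{\vec{a}})=\omega^2f_{\vec{a}}$. This is just $\frac{\d}{\d t}\Phi_{\vec{a},t}^*\d\mu=f_{\vec{a}}\d\mu$, and it is why \eqref{eq_explicit_fa_1} holds. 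Hence your integration by parts gives $\int_\Sigma\widehat{\gamma}(\widehat{\nabla}\scal,X_{\vec{a}})\d\mu=-\int_\Sigma\scal f_{\vec{a}}\d\mu$. That is literally the integrated form of \eqref{eq_explicit_fa_2}, which is the derivative of Gauss--Bonnet you had already discarded. The same holds for $\int\scal^2\d\mu$ or any $\int F(\scal)\d\mu$. Since $\scal_{\vec{a},t}\d\mu_{\vec{a},t}$ is a pullback under $\Phi_{\vec{a},t}$, these invariances only encode the divergence theorem and can never force $\int_\Sigma\scal f_{\vec{a}}\d\mu=0$. Your ``calibration'' with $\scal\to 1$ just reproduces \eqref{eq_explicit_fa_1}. (Incidentally, $\int_\Sigma\scal\d\mu=8\pi$ in this convention, not $4\pi$.)

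The missing ingredient is the Kazdan--Warner identity $\int_\Sigma\widehat{\gamma}(\widehat{\nabla}\scal,X_{\vec{a}})\d\mu=0$ for the curvature of $\omega^2\widehat{\gamma}$ on $\Sbb^2$, which is what the paper invokes. Combined with your integrated \eqref{eq_explicit_fa_2}, it finishes the proof in one line. This is genuinely nonlinear information from the curvature equation, not from conformal invariance alone. To prove it directly, set $u=\ln\omega$, so that $\scal\,\omega^2=2(1-\widehat{\Delta}u)$ and
\[
\int_\Sigma\scal f_{\vec{a}}\d\mu=2\int_{\Sbb^2}\left(1-\widehat{\Delta}u\right)\left(2\widehat{\gamma}(\widehat{\nabla}u,X_{\vec{a}})+\widehat{\dive}X_{\vec{a}}\right)\d\widehat{\mu}.
\]
The constant term integrates to zero. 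After integrating by parts, the terms linear in $u$ cancel because $\widehat{\Delta}\,\widehat{\dive}X_{\vec{a}}=-2\,\widehat{\dive}X_{\vec{a}}$. The quadratic term $\int_{\Sbb^2}\widehat{\Delta}u\,\widehat{\gamma}(\widehat{\nabla}u,X_{\vec{a}})\d\widehat{\mu}$ vanishes because $\mathcal{L}_{X_{\vec{a}}}\widehat{\gamma}=(\widehat{\dive}X_{\vec{a}})\,\widehat{\gamma}$; this is the conformal invariance of the two-dimensional Dirichlet energy, a Pohozaev-type argument. No amount of sign bookkeeping in your linear identities can replace this last step.
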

	\begin{bem}
		Recall the well-known first variation formula for a hypersurface $N$ with mean curvature vector $\vec{H}$ in a Riemannian manifold $(M,g)$
		\[
			\int_N\dive_NX\d\mu_N+\int_Ng(\vec{H},X)\d\mu_N=0
		\]
		for all smooth, compactly supported vector fields $X$ on $M$. Taking $(M,g)=(\R^3,\delta)$ and $X=\vec{a}\in\Sbb^2$ in a neighborhood of a closed surface $N$, the fact that $\vec{a}$ is a parallel vectorfield and $\vec{H}=-H\nu$ yield that
		\[
			\int_NH\spann{\vec{a},\nu}\d\mu_N=0.
		\]
		Since a translation in direction $\vec{a}$ precisely induces such a normal variation $\spann{\vec{a},\nu}$, and since $\scal=\frac{1}{2}\mathcal{H}^2$ on the Minkowski lightcone, the integral identity for $f_{\vec{a}}$ induced by a Lorentz transformation in direction $\vec{a}$ can be understood as the analogue statement.
	\end{bem}
	\begin{proof}
		We have already seen that $f_{\vec{a}}$ has mean value $0$, so it suffices to show that the second integral vanishes as well. As frequently observed above, a M\"obius transformation $\Phi_{\vec{a},t}$ is a conformal diffeomorphism on $\Sbb^2$. Hence, 
		\[
			X_{\vec{a}}:=\frac{\d}{\d t}\Phi_{\vec{a},t}\vert_{t=0}
		\]
		is a conformal Killing vector field on $\Sbb^2$. By \eqref{eq_explicit_fa_2},
		\[
			\int_\Sigma \scal\cdot f_{\vec{a}}\d\mu=-\frac{1}{2}\int_\Sigma -2\Delta f_{\vec{a}}-2\scal f_{\vec{a}}\d\mu=-\frac{1}{2}\int_{\Sigma}J(f_{\vec{a}})\d\mu=-\int_\Sigma \widehat{\gamma}(\widehat{\nabla}\scal,X_{\vec{a}})\d\mu=0,
		\]
		where the last integral vanishes due to the Kazdan--Warner identity. See for example Le \cite{le1} for a proof utilizing the geometry of the Minkowski lightcone.
	\end{proof}
	
	Although any function $f_{\vec{a}}$ has mean zero and can therefore be used as a test function for stability, the strategies implemented in the previous sections highlight that their main utility arises from a set of test functions that enjoy nice pointwise properties when summed up. 
	
	\begin{prop}
		Let $(\Sigma,\gamma)$ be a conformally round surface with conformal factor $\omega$. Then, there exists functions $f_i$, $i=1,2,3$, on $\Sigma$ such that
		\[
		\int_\Sigma f_i\d\mu=0
		\]
		for $i=1,2,3$ and 
		\begin{align*}
			\sum\limits_{i=1}^3f_i^2&=1+\btr{\widehat{\nabla}\ln\omega}^2,\\
			\sum\limits_{i=1}^3\btr{\widehat{\nabla} f_i}^2&=\btr{\widehat{\nabla}^2\ln\omega}^2+\btr{\widehat{\nabla}\ln\omega}^2+\omega^2\scal_\Sigma.
		\end{align*}
		Moreover, the functions $f_i$ span the kernel of $\Delta+\scal_\Sigma$ if and only if $(\Sigma,\gamma)$ has constant curvature.
	\end{prop}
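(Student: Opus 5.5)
The plan is to take as $f_i$ the Lorentz-variation functions $f_{\vec{a}}$ of this section, evaluated in the coordinate directions and normalized by $\tfrac12$. That is, set $f_i:=\tfrac12 f_{\vec{e}_i}$ for $i=1,2,3$. The mean-zero property is then exactly \eqref{eq_explicit_fa_1}.

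The first real step is to identify the conformal Killing field $X_{\vec{a}}=\frac{\d}{\d t}\Phi_{\vec{a},t}\vert_{t=0}$. Differentiating the boost formula for $\Phi_{\vec{a},t}$ from \cite[Section 3]{wolff4} at $t=0$ should give $X_{\vec{a}}=-\widehat{\nabla}(\vec{a}\cdot\vec{x})$, the projection of $-\vec{a}$ onto $T\Sbb^2$. The sign has to be checked, and it is dictated by the $+2\vec{a}\cdot\vec{x}$ term in $f_{\vec{a}}$. With $x_i$ the first spherical harmonics, this gives
\[
f_i=x_i-\widehat{\gamma}(\widehat{\nabla}\ln\omega,\widehat{\nabla}x_i).
\]
Write $v=\widehat{\nabla}\ln\omega$. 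I will use the identities from the proof of Proposition \ref{prop_gradientestimate}:
\[
\sum_i x_i^2=1,\qquad \sum_i x_i\widehat{\nabla}x_i=0,\qquad \sum_i\widehat{\nabla}x_i\otimes\widehat{\nabla}x_i=\widehat{\gamma}.
\]
The last one holds because the $\widehat{\nabla}x_i$ are projections of an orthonormal basis of $\R^3$. Squaring and summing then gives $\sum_i f_i^2=1-2\sum_i x_i\,\widehat{\gamma}(v,\widehat{\nabla}x_i)+\btr{v}^2=1+\btr{v}^2$, which is the first identity.

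For the gradient identity I will use $\widehat{\nabla}^2x_i=-x_i\widehat{\gamma}$ and write $H=\widehat{\nabla}^2\ln\omega$. This gives
\[
\widehat{\nabla}f_i=\widehat{\nabla}x_i-H(\widehat{\nabla}x_i)+x_i v.
\]
Summing squares with the same identities, the three diagonal terms give $2+\btr{H}^2+\btr{v}^2$. The cross term between $\widehat{\nabla}x_i$ and $H(\widehat{\nabla}x_i)$ gives $-2\tr H=-2\widehat{\Delta}\ln\omega$. The cross terms involving $x_iv$ vanish because $\sum_i x_i\widehat{\nabla}x_i=0$. For the conformal metric $\gamma=\omega^2\widehat{\gamma}$ we have $\omega^2\scal_\Sigma=2-2\widehat{\Delta}\ln\omega$. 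The total is therefore $\btr{H}^2+\btr{v}^2+\omega^2\scal_\Sigma$, as claimed. The main obstacle I expect is the bookkeeping of signs and normalizations: the sign of $X_{\vec{a}}$ and the factor $\tfrac12$. A wrong sign would produce $2+2\widehat{\Delta}\ln\omega$ instead of $\omega^2\scal_\Sigma$, so the sign can be cross-checked by requiring this identity.

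For the kernel statement I will use \eqref{eq_explicit_fa_2}, namely $-2(\Delta+\scal)f_{\vec{a}}=2\widehat{\gamma}(\widehat{\nabla}\scal,X_{\vec{a}})$. With the above $X_{\vec{e}_i}$ this becomes $(\Delta+\scal)f_i=\tfrac12\widehat{\gamma}(\widehat{\nabla}\scal,\widehat{\nabla}x_i)$. Since the vectors $\widehat{\nabla}x_i(p)$ span $T_p\Sbb^2$ at every point, all three $f_i$ lie in the kernel if and only if $\widehat{\nabla}\scal\equiv0$, that is, if and only if $\gamma$ has constant curvature.

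It remains to check, in the constant curvature case, that the $f_i$ actually span the kernel. On a round sphere of curvature $\scal=2/r^2$, the kernel of $\Delta+\scal$ is the three-dimensional first eigenspace. The $f_i$ are linearly independent: a relation $\sum_i c_if_i=0$ with $\vec{c}\neq0$ would make the variation along $\Sigma$ under the boost in direction $\vec{c}$ trivial to first order. This is impossible, because that boost moves the area-radius, i.e. $\omega_{\vec{c},t}\neq\omega$ to first order. More concretely, $\sum_i c_if_i=\vec{c}\cdot\vec{x}-\widehat{\gamma}(v,\widehat{\nabla}(\vec{c}\cdot\vec{x}))$ cannot vanish at the two points where $\widehat{\nabla}(\vec{c}\cdot\vec{x})=0$ and $\vec{c}\cdot\vec{x}=\pm\btr{\vec{c}}$. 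Hence the $f_i$ span the kernel.
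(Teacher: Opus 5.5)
Your proposal is correct and follows the paper's route. The paper also takes $f_i=\tfrac12 f_{\partial_i}$, and its angular-coordinate table of conformal Killing fields is exactly your $X_{\vec a}=-\widehat{\nabla}(\vec a\cdot\vec x)$; it gets the mean-zero property from \eqref{eq_explicit_fa_1} and the kernel statement from \eqref{eq_explicit_fa_2} together with the fact that the $X_i$ span each tangent space. Your coordinate-free computation via $\sum_i x_i\widehat{\nabla}x_i=0$, $\sum_i\widehat{\nabla}x_i\otimes\widehat{\nabla}x_i=\widehat{\gamma}$ and $\widehat{\nabla}^2x_i=-x_i\widehat{\gamma}$ spells out the ``lengthy but direct computation'' that the paper omits, and your evaluation at $\vec x=\pm\vec c/\btr{\vec c}$ supplies the linear independence needed for the spanning direction, which the paper leaves implicit.
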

	\begin{proof}
		The proof is building on the above considerations and purely computational in nature. For the convenience of the reader, we give a brief outline:
		
		Consider $f_i:=\frac{1}{2}f_{\partial_i}$, $i=1,2,3$. First, we recall the well-known fact that the Lorentz transformation $L_{\vec{a},t}$ for $t\not=0$ decomposes as  
		\[
		L_{\vec{a},t}=D_{\vec{a}}\circ \Lambda_t\circ D_{\vec{a}}^{-1},
		\]
		where $\Lambda_t$ is a special Lorentz boost in direction $t\partial_3$, and $D_{\vec{a}}$ is the uniquely determined rotation mapping $\partial_3$ to $\vec{a}$ (without any rotation perpendicular to $\vec{a}$). See e.g. \cite[Section 4.2]{wolff_thesis}. In particular, the M\"obius transformation $\Phi_{\vec{a},t}$ decomposes as 
		\[
		\Phi_{\vec{a},t}=\Phi_{D_{\vec{a}}}\circ \Phi_{t}\circ\Phi_{D^{-1}_{\vec{a}}},
		\]
		where $\Phi_t$ is the uniquely determined conformal diffeomorphism related to $\Lambda_t$, and $\Phi_{D}$ denotes the unique isometry on $\Sbb^2$ induced via the canonical embedding of $\Sbb^2$ into $\R^3$ and a rotation $D$ in $\R^3$. In particular, $\Phi_{D^{-1}_{\vec{a}}}=\Phi^{-1}_{D_{\vec{a}}}$.
		
		Introducing angular coordinates $(\theta,\varphi)\in(0,\pi)\times(0,2\pi)$ on $\Sbb^2$, a tedious but straightforward computation yields the following:
		
		\begin{center}
			\begin{tabular}{|c||c|c|c|c|}
				\hline&&&&\\[-1em]
				$\vec{a}$ & $\vec{a}\cdot\vec{x}$ & $X_{\vec{a}}:=\frac{\d}{\d t}\vert_{t=0}\Phi_{\vec{a},t}$ & $\widehat{\nabla}_\theta X_{\vec{a}}$ & $\widehat{\nabla}_\varphi X_{\vec{a}}$\\[0.2em]
				\hline&&&&\\[-0.9em]
				$\partial_1$ & $\cos\varphi\sin\theta$ & $-\cos\varphi\cos\theta\partial_\theta+\frac{\sin\varphi}{\sin\theta}\partial_\varphi$ & $\cos\varphi\sin\theta\partial_\theta$ & $\cos\varphi\sin\theta\partial_\varphi$\\[0.2em]
				\hline&&&&\\[-0.9em]
				$\partial_2$ & $\sin\varphi\sin\theta$ & $-\sin\varphi\cos\theta\partial_\theta-\frac{\cos\varphi}{\sin\theta}\partial_\varphi$ & $\sin\theta\sin\varphi\partial_\theta$ & $\sin\theta\sin\varphi\partial_\varphi$
				\\[0.2em]
				\hline&&&&\\[-0.9em]
				$\partial_3$ & $\cos\theta$ & $\sin\theta\partial_\theta$ & $\cos\theta\partial_\theta$ & $\cos\theta\partial_\varphi$\\[0.2em]
				\hline
			\end{tabular}
		\end{center}
		Note that in particular, $X_i:=X_{\partial_i}$ are conformal Killing vector fields that span the tangent space at all points on $\Sbb^2$ (covered by the chart of angular coordinates). From this and \eqref{eq_explicit_fa_2}, it is easy to infer that the set of functions $f_i$ span the kernel of $\Delta+\scal=-\frac{1}{2}J$ if and only if $\Sigma$ has constant curvature. Finally, the pointwise identities for the set of functions $f_i$ follow from the above identities after another lengthy, but direct computation.
		
	\end{proof}

	\bibliographystyle{plain}
	\bibliography{bib_stability}
	\,\\\\\\
	{\small
	Markus Wolff\newline
	University of Vienna \newline
	Faculty of Mathematics\newline
	Oskar-Morgenstern Platz 1\newline
	1090 Vienna, Austria\newline
	{https://orcid.org/0000-0002-2257-7359}\newline
	markus.wolff@univie.ac.at}
	
	\nopagebreak
\end{document}